\newcommand{\Z}{\mathbb{Z}}
\newcommand{\C}{\mathbb{C}}
\newcommand{\BB}{\mathcal{B}}
\newcommand{\VV}{\mathcal{V}}
\newcommand{\Sch}{\mathcal{S}}
\newcommand{\WW}{\mathcal{W}}
\newcommand{\cent}{\mathrm{Cent}}
\newcommand{\Norm}{\mathrm{Norm}}
\newcommand{\id}{\mathrm{id}}
\newcommand{\tr}{\mathrm{tr}}
\newcommand{\im}{\mathrm{Im}}
\newcommand{\re}{\mathrm{Re}}
\newcommand{\Irr}{\mathrm{Irr}}
\newcommand{\temp}{\mathrm{temp}}
\newcommand{\Ind}{\mathrm{Ind}}
\newcommand{\Hom}{\mathrm{Hom}}
\newcommand{\Sp}{\mathrm{Sp}}
\newcommand{\SL}{\mathrm{SL}}
\newcommand{\GL}{\mathrm{GL}}
\newcommand{\U}{\mathrm{U}}
\newcommand{\ep}{\varepsilon}
\newcommand{\orth}{\mathrm{O}}
\newcommand{\SO}{\mathrm{SO}}
\newcommand{\WD}{{\it WD}}
\newcommand{\half}[1]{\frac{#1}{2}}
\newcommand{\cl}[1]{\widetilde{#1}}
\newcommand{\iif}{&\quad&\text{if }}
\newcommand{\other}{&\quad&\text{otherwise}}
\newcommand{\pair}[1]{\langle #1 \rangle}
\newcommand{\resp}{resp.~}
\newcommand{\w}{\mathfrak{w}}
\newtheorem{thm}{Theorem}[section]
\newtheorem{rem}[thm]{Remark}
\newtheorem{des}[thm]{Desideratum}
\title{On the uniqueness of generic representations in an $L$-packet}
\author{Hiraku Atobe}
\date{}
\address{Department of mathematics, Kyoto University, Kitashirakawa-Oiwake-cho, Sakyo-ku, Kyoto, 606-8502, Japan}
\email{atobe@math.kyoto-u.ac.jp}
\begin{document}
\maketitle
\begin{abstract}
In this paper, we give a simple and short proof of the uniqueness of 
generic representations in an $L$-packet
for a quasi-split connected classical group over a non-archimedean local field.
\end{abstract}

\section{Introduction}
Let $G$ be a quasi-split connected reductive group defined over a non-archimedean local field $F$
of characteristic zero.
We denote the center of $G$ by $Z$.
A Whittaker datum for $G$ is a conjugacy class of a pair $\w = (B,\mu)$, 
where $B = TU$ is an $F$-rational Borel subgroup of $G$, 
$T$ is a maximal $F$-torus, $U$ is the unipotent radical of $B$, and 
$\mu$ is a generic character of $U(F)$. 
Here, $T(F)$ acts on $U(F)$ by conjugation, 
and we say that a character $\mu$ of $U(F)$ is generic 
if the stabilizer of $\mu$ in $T(F)$ is equal to $Z(F)$.
Let $\Irr(G(F))$ be the set of equivalence classes of irreducible smooth representations of $G(F)$. 
We say that $\pi \in \Irr(G(F))$ is $\w$-generic if $\Hom_{U(F)}(\pi,\mu) \not=0$.
\par
\vskip 10pt

The local Langlands conjecture predicts a canonical partition 
\[
\Irr(G(F)) = \bigsqcup_{\phi \in \Phi(G)}\Pi_\phi, 
\]
where $\Phi(G)$ is the set of  $L$-parameters of $G$, 
which are $\widehat{G}$-conjugacy classes of admissible homomorphisms
\[
\phi \colon \WD_F \rightarrow {}^LG
\]
from the Weil--Deligne group $\WD_F = W_F \times \SL_2(\C)$ of $F$ 
to the $L$-group ${}^LG = \widehat{G} \rtimes W_F$ of $G$.
Here, $W_F$ is the Weil group of $F$ and $\widehat{G}$ is the Langlands dual group of $G$.
The set $\Pi_\phi$ is called the $L$-packet of $\phi$.
In addition, for a Whittaker datum $\w$ for $G$, 
there would exist a bijection
\[
\iota_\w \colon \Pi_\phi \rightarrow \Irr(\Sch_\phi), 
\]
which satisfies certain character identities (see e.g., \cite[\S 2]{Ka}).
Here, $\Sch_\phi = \pi_0(\cent(\im(\phi),\widehat{G})/Z(\widehat{G})^{W_F})$
is the component group of $\phi$, 
which is a finite group.
\par
\vskip 10pt

As a relationship of $L$-packets and $\w$-generic representations, the following is expected.
\begin{des}\label{unique0}
Let $\phi \in \Phi(G)$.
If $\pi \in \Pi_\phi$ is $\w$-generic, then 
$\iota_\w(\pi)$ is the trivial representation of $\Sch_\phi$.
\end{des}
Desideratum \ref{unique0} asserts that each $\Pi_\phi$ has at most one $\w$-generic representation.
\par
\vskip 10pt

When $G=\GL_n$, the local Langlands conjecture has been established 
by Harris--Taylor \cite{HT}, Henniart \cite{He}, and Scholze \cite{Sc}.
In this case, for any $\phi \in \Phi(\GL_n)$, the component group $\Sch_\phi$ is trivial, so that
$\Pi_\phi$ is a singleton.
\par

When $G$ is a quasi-split classical group, i.e., 
$G$ is a symplectic, special orthogonal or unitary group, 
the local Langlands conjecture is almost completely known 
by the recent works of Arthur \cite{Ar} and Mok \cite{Mo}.
In this case, Desideratum \ref{unique0} is 
a special case of the local Gan--Gross--Prasad conjecture \cite[Theorem 17.3]{GGP}.
This conjecture, at least for tempered $L$-parameters, has been proven by 
Waldspurger \cite{W1}, \cite{W2}, \cite{W3}, \cite{W4}, 
Beuzart-Plessis \cite{BP1}, \cite{BP2}, \cite{BP3}, Gan--Ichino \cite{GI}, and the author \cite{At}.
Hence Desideratum \ref{unique0} has already been established, 
but it is proven by long and complicated arguments after 
the results of Arthur \cite{Ar}, Mok \cite{Mo} and 
Kaletha--M\'{i}nguez--Shin--White \cite{KMSW}.
Before these results,  
Jiang and Soudry \cite{JS} have shown 
Desideratum \ref{unique0} for $G=\SO(2n+1)$ by a different method
proving the local converse theorem.
The proof of the local converse theorem for $\SO(2n+1)$ highly relies on 
the theory of the local descent, which makes the proof difficult.
\par
\vskip 10pt

The purpose of this paper is to give another proof of Desideratum \ref{unique0} 
for quasi-split classical groups.
Surprisingly, Desideratum \ref{unique0} is a formal consequence 
from results of Arthur \cite{Ar} and Mok \cite{Mo}, 
so that our proof is much shorter and simpler than before.
In our proof, we use two statements:
One is an intertwining relation (Desideratum \ref{IR}), 
which is a relation between a normalized self-intertwining operator
on an induced representation and the local Langlands correspondence.
The other is Shahidi's result (Theorem \ref{OR}), 
which describes the action of the intertwining operator 
on a canonical Whittaker functional on the induced representation.
Both of them focus on induced representations, 
so that one may seem that they cannot be applied to discrete series representations.
The idea of our proof is as follows:
For a given tempered $\w$-generic representation $\pi$ of $G(F)$, 
take a representation $\tau$ of $\GL_k(E)$ with an extension $E/F$, 
and consider the induced representation $\Ind_{P(F)}^{G'(F)}(\tau \boxtimes \pi)$
of a bigger group $G'(F)$ of the same type as $G(F)$.
Then we can apply two statements to this induced representation. 
Taking several representations $\tau$ and considering 
the associated induced representations of several bigger groups, 
we obtain Desideratum \ref{unique0} for $\pi$.
\par
\vskip 10pt

In this paper, we do not treat the existence of $\w$-generic representations, 
but we only state a desideratum for tempered $L$-parameters.
The following are expected and called Shahidi's conjecture \cite[Conjecture 9.4]{Sh2}.
\begin{des}\label{exist}
Let $\phi \in \Phi(G)$ be a tempered $L$-parameter (see \S \ref{Lpara} below).
Then $\Pi_\phi$ has a $\w$-generic representation. 
\end{des}

When $G=\GL_n$, it follows from a result of Zelevinsky \cite[Theorem 9.7]{Z}.
When $G$ is a quasi-split classical group, 
Arthur \cite[Proposition 8.3.2]{Ar} and Mok \cite[Corollary 9.2.4]{Mo} proved Desideratum \ref{exist} 
by using a global argument.
In addition, Kaletha \cite[Theorem 3.3]{Ka} showed that $\Pi_\phi$ has a $\w'$-generic representation 
for any tempered $L$-parameter $\phi$ of $G$ and any Whittaker datum $\w'$ for $G$.
\par

Before these results of Arthur, Mok and Kaletha, there is a result of Konno \cite{Ko}.
Under the assumption that the residual characteristic of $F$ is not two, 
Konno \cite[Theorem 4.1]{Ko} proved a twisted analogue of 
a result of Moeglin--Waldspurger \cite{MW}, which states that
the `leading' coefficients in the (twisted) character expansion of 
an irreducible representation $\pi$ of $G(F)$ at the identity element 
give the dimensions of certain spaces of degenerate Whittaker models.
By comparing the twisted character expansion of $\GL_n$ 
with the ordinary one of a classical group $G$, 
Konno concluded that 
for each tempered $L$-parameter $\phi$ of $G$, 
$\Pi_\phi$ has a $\w'$-generic representation for some Whittaker datum $\w'$ for $G$
(\cite[Thorem 8.4]{Ko}).
As is mentioned in the remark after Proposition 8.3.2 in \cite{Ar}, 
one may seem to be able to prove the uniqueness (Desideratum \ref{unique0}) 
by the same method.
However it is not so immediately.
To prove the uniqueness by Konno's method, 
one would have to show that the Fourier transforms of the (twisted) orbital integrals of 
$\GL_n$ and $G$
may be distinguished simultaneously 
by a function $f$ on $\GL_n$ and its transfer $f^G$ on $G$, respectively.
\par
\vskip 10pt

Finally, we remark on the archimedean case.
When $F$ is an archimedean local field, 
the local Langlands conjecture has been established by Langlands \cite{L} himself.
Desiderata \ref{unique0} and \ref{exist} were proven by \cite{V}, \cite{Kos} and \cite{She}.
\par

\subsection*{Acknowledgments}
The author is grateful to Professor Atsushi Ichino for his helpful comments. 
Thanks are also due to the referees for helpful comments.
This work was supported by the Foundation for Research Fellowships of Japan Society for the Promotion of Science for Young Scientists (DC1) Grant 26-1322. 
\par

\subsection*{Notations}
Let $F$ be a non-archimedean local field with characteristic zero.
For a finite extension $E/F$, 
we denote by 
$W_E$ and $\WD_E=W_E\times\SL_2(\C)$ 
the Weil and Weil--Deligne groups of $E$, respectively.
The normalized absolute value on $E$ is denoted by $|\cdot|_E$.

\section{Local Langlands correspondence for classical groups}
The local Langlands correspondence (the local Langlands conjecture) 
for quasi-split classical groups
has been established by Arthur \cite{Ar} and Mok \cite{Mo}  
under some assumption on the stabilization of twisted trace formulas.
For this assumption, see also the series of papers
\cite{Stab1}, \cite{Stab2}, \cite{Stab3}, \cite{Stab4}, \cite{Stab5}, 
\cite{Stab6}, \cite{Stab7}, \cite{Stab8}, \cite{Stab9} and \cite{Stab10}.
In this section, we summarize some of its properties which are used in this paper.

\subsection{Generic representations}
Let $G$ be a quasi-split (connected) classical group, i.e., 
$G$ is a unitary, symplectic or special orthogonal group.
We denote by $Z$ the center of $G$.
A Whittaker datum for $G$ is a conjugacy class of a pair $\w = (B,\mu)$, 
where $B = TU$ is an $F$-rational Borel subgroup of $G$ and 
$\mu$ is a generic character of $U(F)$. 
Here, $T(F)$ has the adjoint action on $U(F)$ and so that 
$T(F)$ acts on the set of characters of $U(F)$.
We say that a character $\mu$ of $U(F)$ is generic 
if the stabilizer of $\mu$ in $T(F)$ is equal to $Z(F)$.
\par

Let $\Irr(G(F))$ be the set of equivalence classes of irreducible smooth representations of $G(F)$, 
and $\Irr_\temp(G(F))$ be the subset of $\Irr(G(F))$ 
consisting of classes of irreducible tempered representations.
For a Whittaker datum $\w=(B,\mu)$ with $B=TU$, 
we say that $\pi \in \Irr(G(F))$ is $\w$-generic if
\[
\Hom_{U(F)}(\pi,\mu) \not=0.
\]
\par

\subsection{$L$-parameters and component groups}\label{Lpara}
Let $G$ be a quasi-split (connected) classical group.
If $G$ is a unitary group, 
we denote by $E$ the splitting field of $G$, which is a quadratic extension of $F$.
If $G$ is a symplectic group or a special orthogonal group, we set $E=F$.
We denote by $\widehat{G}$ the Langlands dual group of $G$, and 
by ${}^L G = \widehat{G} \rtimes W_F$ the $L$-group of $G$.
An $L$-parameter $\varphi$ of $G$ is a $\widehat{G}$-conjugacy class of
an admissible homomorphism
\[
\varphi \colon \WD_F \rightarrow {}^LG.
\]
There is a standard representation ${}^L G \rightarrow \GL_N(\C)$ for a suitable $N$ 
(see \cite[\S 7]{GGP}).
By composing an $L$-parameter $\varphi$ with this map, 
we obtain a (conjugate) self-dual representation $\phi$ of $\WD_E$.
More precisely, see \cite[\S 3 and \S8]{GGP}.
We denote by $\Phi(G)$ the set of equivalence classes of (conjugate) self-dual 
representations of $\WD_E$ with suitable type and determinant as in \cite[Theorem 8.1]{GGP}.
Then $\varphi \mapsto \phi$ gives a surjective map
\[
\{\text{$L$-parameters of $G$}\}
\rightarrow \Phi(G),
\]
which is bijective unless $G=\SO(2n)$ in which case, 
the number of each fiber of this map is one or two.
\par

We say that $\phi \in \Phi(G)$ is tempered if $\phi(W_E)$ is bounded.
We denote by $\Phi_\temp(G)$ the subset of $\Phi(G)$
consisting of classes of tempered representations.
\par

If $\phi \in \Phi(G)$ is given by an $L$-parameter $\varphi$, 
we define the component group $\Sch_\phi$ of $\phi$ by
\[
\Sch_\phi = \pi_0(\cent(\im(\varphi), \widehat{G})/Z(\widehat{G})^{W_F}).
\]
Here, $Z(\widehat{G})$ is the center of $\widehat{G}$.
Note that 
$\Sch_\phi$ does not depend on the choice of $\varphi$, and
$\Sch_\phi$ is isomorphic to $(\Z/2\Z)^r$ for some non-negative integer $r$.
As in \cite[\S 4]{GGP}, $\Sch_\phi$ is described explicitly as follows.
We denote by $\BB_\phi$ the set of equivalence classes of 
representations $\phi'$ of $\WD_E$ such that
\begin{itemize}
\item
$\phi'$ is contained in $\phi$; 
\item
$\phi'$ is a multiplicity-free sum of
irreducible (conjugate) self-dual representations of $\WD_E$ with the same type as $\phi$.
\end{itemize}
Also we put
\[
\BB_\phi^+=
\left\{
\begin{aligned}
&\BB_\phi	\iif \text{$G=\U(m)$ or $G=\SO(2n+1)$},\\
&\{\phi' \in \BB_\phi\ |\ \dim(\phi') \in 2\Z\} \iif \text{$G=\Sp(2n)$ or $G=\SO(2n)$}.
\end{aligned}
\right.
\]
When $\phi_0$ is an irreducible representation of $\WD_E$, 
the multiplicity of $\phi_0$ in $\phi$ is denoted by $m(\phi_0;\phi)$, i.e.,
\[
m(\phi_0;\phi) = \dim\Hom(\phi_0,\phi).
\]
For $\phi_1, \phi_2 \in \BB_\phi$, 
we write $\phi_1 \sim \phi_2$ if 
\[
m(\phi_0;\phi_1) + m(\phi_0; \phi_2) \equiv m(\phi_0;\phi) \bmod2
\]
for any irreducible (conjugate) self-dual representation $\phi_0$ of $\WD_E$ 
with the same type as $\phi$.
For $a \in \Sch_\phi$, 
choose a semisimple representative $s \in \cent(\im(\varphi), \widehat{G})$ of $a$.
We regard $s$ as an automorphism on the space of $\phi$, 
which commutes with the action of $\WD_E$.
We denote by $\phi^{s=-1}$ the $(-1)$-eigenspace of $s$, which is a representation of $\WD_E$.
We define $\phi^a \in \BB_\phi^+$ so that 
\[
m(\phi_0;\phi^a) \equiv m(\phi_0;\phi^{s=-1}) \bmod 2 
\]
for any irreducible (conjugate) self-dual representation $\phi_0$ of $\WD_E$ 
with the same type as $\phi$.
Then the image of $\phi^a$ in $\BB_\phi^+/\sim$ does not depend on the choice of $s$, 
and the map 
\[
\Sch_\phi \rightarrow \BB_\phi^+/\sim,\ a \mapsto \phi^a
\]
is bijective.

\subsection{Local Langlands correspondence for classical groups}
In this subsection, we introduce $\Pi(G)$, which is a quotient of $\Irr(G(F))$,  and 
state some expected properties of the local Langlands correspondence which we need.
\par

First, we consider an even special orthogonal group $G=\SO(2n)$.
Choose $\ep \in \orth(2n, F) \setminus \SO(2n, F)$. 
For $\pi \in \Irr(\SO(2n, F))$, 
its conjugate $\pi^\ep$ is defined by $\pi^\ep(h)=\pi(\ep^{-1}h\ep)$ for $h \in \SO(2n, F)$.
We define an equivalence relation $\sim_\ep$ on $\Irr(\SO(2n, F))$ by 
\[
\pi \sim_{\ep} \pi^\ep
\]
for $\pi \in \Irr(\SO(2n, F))$. 
In \cite{Ar}, one has parametrized not $\Irr(\SO(2n, F))$ but $\Irr(\SO(2n, F))/\sim_\ep$.
Note that $\pi$ is tempered (\resp $\w$-generic) if and only if so is $\pi^\ep$.
\par

We return the general setting.
Let $G$ be a quasi-split (connected) classical group.
We define $\Pi(G)$ by
\[
\Pi(G)=
\left\{
\begin{aligned}
&\Irr(G(F))/\sim_{\ep} \iif \text{$G=\SO(2n)$},\\
&\Irr(G(F))	\other.
\end{aligned}
\right.
\]
For $\pi \in \Irr(G(F))$, we denote the image of $\pi$ under the canonical map 
$\Irr(G(F)) \rightarrow \Pi(G)$ by $[\pi]$.
We say that $[\pi] \in \Pi(G)$ is $\w$-generic (\resp tempered) if so is any representative $\pi$.
Also, we put $\Pi_\temp(G)$ to be the image of $\Irr_\temp(G(F))$ in $\Pi(G)$.
\par

Now we are ready to describe desiderata for the local Langlands correspondence.
\begin{des}\label{des}
Let $G$ be a quasi-split (connected) classical group.
We fix a Whittaker datum $\w$ for $G$.
\begin{enumerate}
\item
There exists a canonical surjection (not depending on $\w$)
\[
\Pi(G) \rightarrow \Phi(G).
\]
For $\phi \in \Phi(G)$, we denote by $\Pi_\phi$
the inverse image of $\phi$ under this map,
and call $\Pi_\phi$ the $L$-packet of $\phi$.
\item
There exists a bijection (depending on $\w$)
\[
\iota_\w \colon \Pi_\phi \rightarrow \widehat{\Sch_\phi},
\]
which satisfies certain character identities. 
Here, $\widehat{\Sch_\phi}$ is the Pontryagin dual of $\Sch_\phi$.
\item
We have 
\[
\Pi_\temp(G) = \bigsqcup_{\phi \in \Phi_\temp(G)}\Pi_\phi.
\]
\item
Assume that $\phi=\phi_\tau \oplus \phi_0 \oplus {}^c\phi_\tau^\vee$, where 
\begin{itemize}
\item
$\phi_0$ is an element in $\Phi_\temp(G_0)$ with a classical group $G_0$ of the same type as $G$;
\item
$\phi_\tau$ is a tempered representation of $\WD_E$ with dimension $k$.
\end{itemize}
Here, ${}^c\phi_\tau$ is the Galois conjugate of $\phi_\tau$ (see \cite[\S 3]{GGP}).
Let $\tau$ be the irreducible tempered representation of $\GL_k(E)$ corresponding to $\phi_\tau$.
Then 
for a representative $\pi_0$ of an element in $\Pi_{\phi_0}$, 
the induced representation 
\[
\Ind_{P(F)}^{G(F)}(\tau \otimes \pi_0)
\]
decomposes into a direct sum of irreducible tempered representations of $G(F)$,
where $P=M_PU_P$ is a parabolic subgroup of $G$ 
with Levi subgroup $M_P(F)=\GL_k(E) \times G_0(F)$. 
The $L$-packet $\Pi_\phi$ is given by
\[
\Pi_\phi = \{[\pi]\ |\ \pi \subset \Ind_{P(F)}^{G(F)}(\tau \otimes \pi_0), [\pi_0] \in \Pi_{\phi_0}\}.
\]
Moreover there is a canonical inclusion $\Sch_{\phi_0} \hookrightarrow \Sch_\phi$.
If $\pi \subset \Ind_{P(F)}^{G(F)}(\tau \otimes \pi_0)$ and 
$\w_0$ is the Whittaker datum for $G_0$ given by the restriction of $\w$, 
then $\iota_\w([\pi])|\Sch_{\phi_0} = \iota_{\w_0}([\pi_0])$.
\item
Assume that 
\[
\phi = \phi_1|\cdot|^{s_1} \oplus \dots \oplus \phi_r|\cdot|^{s_r} \oplus 
\phi_0 \oplus {}^c\phi_r^\vee|\cdot|^{-s_r} \oplus \dots \oplus {}^c\phi_1^\vee|\cdot|^{-s_1}
\]
where 
\begin{itemize}
\item
$\phi_0$ is an element in $\Phi_\temp(G_0)$ with a classical group $G_0$ of the same type as $G$;
\item
$\phi_{i}$ is a tempered representation of $\WD_E$ with dimension $k_i$ 
for $1 \leq i \leq r$; 
\item
$s_i$ is a real number such that $s_1 \geq \dots \geq s_r>0$.
\end{itemize}
Let $\tau_i$ be the irreducible tempered representation of $\GL_{k_i}(E)$ 
corresponding to $\phi_{i}$.
Then the $L$-packet $\Pi_\phi$ consists of (the equivalent classes of)
the unique irreducible quotient $\pi$ of 
the standard module
\[
\Ind_{P(F)}^{G(F)}
(\tau_1|\det|_E^{s_1} \otimes \dots \otimes \tau_r|\det|_E^{s_r} \otimes \pi_0), 
\]
where $\pi_0$ runs over (representatives of) elements in $\Pi_{\phi_0}$.
Here, $P=M_PU_P$ is a parabolic subgroup of $G$ with Levi subgroup
$M_P(F) = \GL_{k_1}(E) \times \dots \times \GL_{k_r}(E) \times G_0(F)$.
Moreover there is a canonical inclusion $\Sch_{\phi_0} \hookrightarrow \Sch_\phi$, 
which is in fact bijective. 
If $\pi$ is the unique irreducible quotient of the above standard module
and $\w_0$ is the Whittaker datum for $G_0$ given by the restriction of $\w$, 
then $\iota_\w([\pi])|\Sch_{\phi_0} = \iota_{\w_0}([\pi_0])$.
\end{enumerate}
\end{des}

\begin{rem}
Arthur \cite{Ar} and Mok \cite{Mo} have established the local Langlands correspondence
for tempered parameters, i.e., Desideratum \ref{des} (2), (3) and (4).
Using (5) and the Langlands classification, we may obtain (1).
\end{rem}

\subsection{Intertwining relations}\label{sec.IR}
The purpose of this paper is to give a simple and short proof of Desideratum \ref{unique0}
when $G$ is a quasi-split classical group.
To do this, we need one more technical desideratum.
This is a relation between a normalized self-intertwining operator
and the local Langlands correspondence.
\par

Let $G$ be a quasi-split (connected) classical group
and $\w = (B,\mu)$ be a Whittaker datum for $G$.
We denote the unipotent radical of $B$ by $U$.
Fix a positive integer $k$, and put 
$G'= \U(m+2k)$, $\Sp(2n+2k)$ or $\SO(m+2k)$ 
when $G= \U(m)$, $\Sp(2n)$ or $\SO(m)$, respectively.
If $G=\Sp(2n)$ or $G=\SO(2n)$, we assume that $k$ is even.
Let $\w' = (B', \mu')$ be a Whittaker datum for $G'$ such that 
$B= B' \cap G$ and $\mu = \mu'|U(F)$.
We consider a maximal $F$-parabolic subgroup $P=M_PU_P$ of $G'$ containing $B'$
such that the Levi subgroup $M_P$ of $P$ is of the form $M_P(F) \cong \GL_k(E) \times G(F)$. 
Here $U_P$ is the unipotent radical of $P$, so that 
$U_P$ is contained in the unipotent radical $U'$ of $B'$.
We denote by $\delta_P$ the modulus character of $P$.
Let $\pi$ (\resp $\tau$) be an irreducible tempered representation of $G(F)$ (\resp $\GL_k(E)$) 
on a space $\VV_{\pi}$ (\resp $\VV_\tau$).
We consider the normalized induction
\[
I_0(\tau \boxtimes \pi) = \Ind_{P(F)}^{G'(F)}(\tau \boxtimes \pi), 
\]
which consists of smooth functions $f_0 \colon G'(F) \rightarrow \VV_\tau \otimes \VV_\pi$ 
such that
\[
f_0(uagg') = \delta_P^{\half{1}}(a)(\tau(a) \boxtimes \pi(g)) f_0(g')
\]
for $u \in U_P(F)$, $a \in \GL_k(E)$, $g \in G(F)$ and $g' \in G'(F)$.
\par

We denote by $A_P$ the split component of the center of $M_P$ and
by $W(M_P) = \Norm(A_P, G')/M_P$ the relative Weyl group for $M_P$.
Note that $W(M_P) \cong \Z/2\Z$ (unless $G$ is the split $\SO(2)$ and $k=1$).
Let $w \in W(M_P)$ be the unique non-trivial element 
which induces an automorphism of $M_P(F) \cong \GL_k(E) \times G(F)$
whose restriction on $G(F)$ is trivial.
Fixing a splitting of $G'$, which gives the Whittaker datum $\w'$, 
we obtain a representative $\cl{w} \in G'(F)$ of $w$ as in 
\cite[\S 2.3]{Ar} and \cite[\S 3.3]{Mo}.
\par

Now suppose that $w(\tau \boxtimes \pi) \cong \tau \boxtimes \pi$, where
$w(\tau \boxtimes \pi)(m) = (\tau \boxtimes \pi)(\cl{w}^{-1}m\cl{w})$ for $m \in M_P(F)$.
Then Arthur \cite[\S 2.3]{Ar} and Mok \cite[\S 3.3]{Mo} 
have defined a normalized intertwining operator
\[
R_{\w'}(w, \tau \boxtimes \pi) \colon I_0(\tau \boxtimes \pi) \rightarrow I_0(w(\tau \boxtimes \pi))
\]
which depends on the Whittaker datum $\w'$ for $G'$.
Let $\cl{\tau\boxtimes \pi}(\cl{w}) \colon 
\VV_\tau \otimes \VV_\pi \rightarrow \VV_\tau \otimes \VV_\pi$ be a unique linear map
satisfying
\begin{itemize}
\item
$\cl{\tau\boxtimes \pi}(\cl{w}) \circ (\tau \boxtimes \pi) (\cl{w}^{-1} m \cl{w})
= (\tau \boxtimes \pi) (m) \circ \cl{\tau\boxtimes \pi}(\cl{w})$;
\item
$\cl{\tau\boxtimes \pi}(\cl{w}) = \cl{\tau}(\cl{w}) \otimes \cl{\pi}(\cl{w})$, 
where $\cl{\pi}(\cl{w}) \colon \VV_\pi \rightarrow \VV_\pi$ is the identity map, and
$\cl{\tau}(\cl{w}) \colon \VV_\tau \rightarrow \VV_\tau$ 
is the unique linear map which preserves a Whittaker functional on $\VV_\tau$
(with respect to the Whittaker datum for $\GL_k(E)$ given by the restriction of $\w'$).
\end{itemize}
Note that $\tau$ is generic since $\tau$ is a tempered representation of $\GL_k(E)$.
Finally, 
we define a normalized self-intertwining operator 
$R_{\w'}(w, \cl{\tau \boxtimes \pi}) \colon I_0(\tau \boxtimes \pi) \rightarrow I_0(\tau \boxtimes \pi)$ by
\[
R_{\w'}(w, \cl{\tau \boxtimes \pi}) = \cl{\tau\boxtimes \pi}(\cl{w}) \circ R_{\w'}(w,\tau \boxtimes \pi).
\]
\par

Suppose that $[\pi] \in \Pi_\phi$ for $\phi \in \Phi_\temp(G)$.
Let $\phi_\tau$ be the tempered representation of $\WD_E$ corresponding to $\tau$.
Note that ${}^c \phi_\tau ^\vee \cong \phi_\tau$ 
since $w(\tau \boxtimes \pi) \cong \tau \boxtimes \pi$ so that $\tau$ is (conjugate) self-dual. 
Put $\phi'= \phi_\tau \oplus \phi \oplus \phi_\tau \in \Phi_\temp(G')$.
Let $\pi'$ be an irreducible direct summand of $I_0(\tau \otimes \pi)$. 
Then we have $[\pi'] \in \Pi_{\phi'}$.
\par

For the proof of Desideratum \ref{unique0}, 
we use the following extra desideratum.
\begin{des}[Intertwining relation]\label{IR}
Let the notation be as above.
Assume that $\phi_\tau \in \BB_{\phi'}^+$, so that
there exists a unique element $a \in \Sch_{\phi'}$ such that 
$\phi'^{a} = \phi_\tau$ in $\BB_{\phi'}^+/\sim$.
Then 
\[
R_{\w'}(w, \cl{\tau \boxtimes \pi}) | \pi' = \iota_{\w'}([\pi'])(a) \cdot \id.
\]
\end{des}

The local Langlands correspondence established by Arthur and Mok
satisfies the intertwining relation.
\begin{thm}\label{IRproof}
The intertwining relation (Desideratum \ref{IR}) follows from 
Theorems 2.2.1 and 2.4.1 in \cite{Ar} when $G=\Sp(2n)$ or $G=\SO(m)$, 
and Theorems 3.2.1 and 3.4.3 in \cite{Mo} when $G=\U(m)$.
\end{thm}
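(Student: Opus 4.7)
\emph{Proof sketch.} The plan is to specialize the tempered local intertwining relation of Arthur \cite[Theorem 2.4.1]{Ar} and Mok \cite[Theorem 3.4.3]{Mo} to our parameter $\phi' = \phi_\tau \oplus \phi \oplus \phi_\tau$ and maximal parabolic $P$, then identify the sign element appearing there with the element $a$ of Desideratum \ref{IR}. Those theorems assert that $R_{\w'}(w, \widetilde{\tau \boxtimes \pi})$ preserves each irreducible summand $\pi' \subset I_0(\tau \boxtimes \pi)$ and acts on it by $\iota_{\w'}([\pi'])(s_w)\cdot \id$ for an explicit element $s_w \in \Sch_{\phi'}$ attached to $w$; the character identities of \cite[Theorem 2.2.1]{Ar} and \cite[Theorem 3.2.1]{Mo} guarantee that the bijection $\iota_{\w'}$ appearing there is the one of Desideratum \ref{des}. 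The content of the theorem is thus the identity $s_w = a$.

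To identify $s_w$, fix an $L$-parameter lift $\varphi'$ of $\phi'$ and write $\widehat{M_P} = \GL_k(\C) \times \widehat{G}$ inside $\widehat{G'}$ in the standard block form, so that the underlying standard representation decomposes as $V = V_1 \oplus V_0 \oplus V_1^\vee$, with $V_1$ carrying $\phi_\tau$, $V_0$ carrying $\phi$, and $V_1^\vee$ paired with $V_1$ by the form on $V$. The hypothesis $\phi_\tau \in \BB_{\phi'}^+$ packages both the self-duality $\phi_\tau \cong {}^c\phi_\tau^\vee$ of the correct type and the parity condition ensuring that the ``swap'' element exchanging $V_1$ with $V_1^\vee$ (while acting trivially on $V_0$) lies in $\widehat{G'}$; call this element $s$. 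Then $s$ commutes with $\im(\varphi')$ and is a semisimple representative of $s_w$; its $(-1)$-eigenspace is one copy of $V_\tau$, so $\phi'^{s=-1} \cong \phi_\tau$. By the recipe of \S\ref{Lpara} this gives $\phi'^{s_w} = \phi_\tau$ in $\BB_{\phi'}^+/\sim$, hence $s_w = a$ by the bijection $\Sch_{\phi'} \cong \BB_{\phi'}^+/\sim$.

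The main obstacle is bookkeeping rather than substance. Arthur and Mok work in the broader $\psi$-parameter formalism with slightly different conventions for the component group and the packet bijection, so one must restrict to tempered $\phi$ and translate their formulas into the notation of \S\ref{Lpara}; one must also verify that their normalization of the self-intertwining operator agrees with the one fixed in \S\ref{sec.IR}, in particular the Whittaker-compatible lift $\widetilde{\tau}(\widetilde{w})$ on the $\GL_k$-factor. Additional care is required in the $\SO(2n)$ case, where the correspondence is only parameterized modulo $\sim_\ep$, and in the $\U(m)$ case, where the Galois twist ${}^c(\cdot)^\vee$ must be tracked. Beyond these translations, no new input is needed: the assertion is a transcription of the tempered local intertwining relation for a maximal parabolic into the explicit parameterization of \S\ref{Lpara}.
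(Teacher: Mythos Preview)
Your overall strategy matches the paper's: construct a semisimple element in $\cent(\im(\varphi'),\widehat{G'}) \cap \Norm(\widehat{M}_P,\widehat{G'})$ representing both $a \in \Sch_{\phi'}$ and $w \in W(M_P)$, identify its $(-1)$-eigenspace with $\phi_\tau$, and invoke Arthur (or Mok). Your identification $s_w = a$ via the swap element is correct and is essentially the paper's construction. However, you misstate what Theorems~2.2.1 and~2.4.1 of \cite{Ar} actually deliver. They do not directly assert that $R_{\w'}(w,\cl{\tau\boxtimes\pi})$ acts on each summand $\pi'$ by the scalar $\iota_{\w'}([\pi'])(a)$; combining them yields a \emph{character identity}
\[
\sum_{[\pi']\in\Pi_{\phi'}} \iota_{\w'}([\pi'])(a)\,\tr(\pi'(f'))
= \sum_{[\pi]\in\Pi_\phi} \iota_\w(\cl{\pi})(\cl{u})\cdot
\tr\bigl(R_{\w'}(w,\cl{\tau\boxtimes\pi})\,I_0(\tau\boxtimes\pi)(f')\bigr),
\]
where $\cl{u}$ is the image of your element in the group $\mathfrak{N}_{\phi'} = \pi_0\bigl(\cent(\im(\varphi'),\widehat{G'})\cap\Norm(\widehat{M}_P,\widehat{G'})/Z(\widehat{G'})^{W_F}\bigr)$, and $\iota_\w(\cl{\pi})(\cl{u})$ is Arthur's pairing constant measuring the compatibility of the chosen extension $\cl{\pi}(\cl{w}) = \id_{\VV_\pi}$ on the $G$-factor with his canonical normalization. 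The desired scalar action on each $\pi'$ follows only after one shows $\iota_\w(\cl{\pi})(\cl{u}) = 1$ for every $[\pi]\in\Pi_\phi$.

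This is the step you omit, and it is the substantive computation in the paper's proof rather than bookkeeping. The paper handles it by constructing a section $s'\colon \mathfrak{N}_{\phi'} \rightarrow \Sch_\phi$ (restriction to the $\phi$-block; this is exactly where the standing parity hypothesis that $k$ be even for $G = \Sp(2n)$ or $\SO(2n)$ is used, so that the restriction lands in $\widehat{G}$), verifying that $\iota_\w(\cl{\pi})(\cl{u}) = \iota_\w([\pi])(s'(\cl{u}))$, and then observing that the swap element acts trivially on the $\phi$-block, so $s'(\cl{u}) = 1$. Note that this concerns the $G$-factor of $\cl{\tau\boxtimes\pi}(\cl{w})$, not the Whittaker-normalized $\GL_k$-lift you flag in your last paragraph; these are separate compatibility checks, and the one you do not mention is the one carrying the content.
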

\begin{proof}
We prove only the case when $G=\SO(2n)$.
The other cases are similar.
\par

Suppose that $G=\SO(2n)$.
In this case, 
since $\phi_\tau$ and $\phi$ are orthogonal representations, 
there are non-degenerated symmetric matrices $A$ and $B$ of size $k$ and $2n$ such that
\[
{}^t\phi_\tau(x) A \phi_\tau(x) = A
\quad\text{and}\quad
{}^t\phi(x) B \phi(x) = B
\]
for $x \in \WD_F$, respectively.
We regard $\widehat{G'} = \SO(2n+2k, \C)$ as the special orthogonal group 
with respect to the symmetric matrix $\mathrm{diag}(A, B, -A)$.
The image of $\phi' = \phi_\tau \oplus \phi \oplus \phi_\tau$ is contained in $\orth(2n+2k, \C)$.
Let $\{e'_1, \dots, e'_k, e_1, \dots, e_{2n}, e_1'', \dots,  e_{k}''\}$ 
be the canonical basis of $\C^{2n+2k}$.
Then $\widehat{M}_P$ is realized as the Levi subgroup of $\widehat{G'}$ 
stabilizing two isotypic subspaces
\[
\mathrm{span}\{e'_i + e_{i}''\ |\ i=1, \dots, k\}
\quad\text{and}\quad
\mathrm{span}\{e'_i - e_{i}''\ |\ i=1, \dots, k\}.
\]
Note that the image of $\phi' = \phi_\tau \oplus \phi \oplus \phi_\tau$ stabilizes these two subspaces.
\par

Let $u \in \widehat{G'}$ be the element which acts on 
$\{e'_1, \dots, e'_k, e_1, \dots, e_{2n}\}$ by $+1$,
and on $\{e''_{1}, \dots, e''_k\}$ by $-1$.
Then $u \in \cent(\im(\phi'), \widehat{G'}) \cap \Norm(\widehat{M}_P, \widehat{G'})$.
Note that $a \in \Sch_{\phi'}$ and $w \in W(M_P) \cong W(\widehat{M}_P)$ are
the images of $u$ under the canonical maps $\cent(\im(\phi'), \widehat{G'}) \rightarrow \Sch_{\phi'}$ 
and $\Norm(\widehat{M}_P, \widehat{G'}) \rightarrow W(\widehat{M}_P)$, respectively.
By applying Theorems 2.2.1 (b) and 2.4.1 in \cite{Ar} to $u$, 
we obtain the character identity
\[
\sum_{[\pi'] \in \Pi_{\phi'}}\iota_{\w'}([\pi'])(a) \cdot \tr(\pi'(f'))
=
\sum_{[\pi] \in \Pi_{\phi}}\iota_{\w}(\cl{\pi})(\cl{u}) \cdot 
\tr(R_{\w'}(w, \cl{\tau \boxtimes \pi})I_0(\tau \otimes \pi)(f'))
\]
for any $f' \in \cl{\mathcal{H}}(G')$.
Here, $\cl{\mathcal{H}}(G')$ is the subalgebra of the Hecke algebra $\mathcal{H}(G')$ of $G'$
consisting of $\mathrm{Out}(G')$-invariant functions, 
and $\iota_{\w}(\cl{\pi})(\cl{u}) \in \C^\times$ is a constant.
(In \cite{Ar}, $\iota_{\w'}([\pi'])(a)$ and $\iota_{\w}(\cl{\pi})(\cl{u})$ are 
denoted by $\pair{x,\pi'}$ and $\pair{\cl{u}, \cl{\pi}}$, respectively.)
\par

The constant $\iota_{\w}(\cl{\pi})(\cl{u})$ is defined as follows:
Put 
\[
\mathfrak{N}_{\phi'} = 
\pi_0( \cent(\im(\phi'), \widehat{G'}) \cap \Norm(\widehat{M}_P, \widehat{G'})/Z(\widehat{G'})^{W_F}).
\] 
Then $\mathfrak{N}_{\phi'}$ contains $\Sch_{\phi}$
(see also the diagram (2.4.3) in \cite{Ar}).
We set $\cl{u} \in \mathfrak{N}_{\phi'}$ to be the image of $u$.
Since $k$ is even, 
for $u' \in \Norm(\widehat{M}_P, \widehat{G'})$, 
the action of $u'$ on $\{e_1, \dots, e_{2n}\}$ gives an element $u'_0 \in \widehat{G}$.
The map $u' \mapsto u'_0$ induces a section
\[
s' \colon \mathfrak{N}_{\phi'} \rightarrow \Sch_{\phi}.
\]
We define the map
$\iota_{\w}(\cl{\pi}) \colon \mathfrak{N}_{\phi'} \rightarrow \{\pm1\}$
by 
\[
\iota_{\w}(\cl{\pi}) = \iota_{\w}([\pi]) \circ s'.
\]
In particular, since $s'(\cl{u})=1$, we have $\iota_{\w}(\cl{\pi})(\cl{u}) = \iota_{\w}([\pi])(1) = 1$.
\par

Hence the character identity implies that
\[
\sum_{[\pi'] \in \Pi_{\phi'}}\iota_{\w'}([\pi'])(a) \cdot \tr(\pi'(f'))
=
\sum_{[\pi] \in \Pi_{\phi}}\sum_{\pi' \subset I_0(\tau \otimes \pi)}
\tr(R_{\w'}(w, \cl{\tau \boxtimes \pi})\pi'(f'))
\]
for $f' \in \cl{\mathcal{H}}(G')$.
Therefore we have $R_{\w'}(w, \cl{\tau \boxtimes \pi}) | \pi' = \iota_{\w'}([\pi'])(a) \cdot \id$.
\end{proof}

\begin{rem}
\begin{enumerate}
\item
If $G=\SO(2n)$ but $k$ were odd, 
there would be no canonical choice of $\cl{\pi}(\cl{w})$. 
In this case, for each choice, the constant
$\iota_{\w}(\cl{\pi})(\cl{u}) \in \C^\times$ is defined by
using the pairing of \cite[Theorem 2.2.4]{Ar}.
\item
When $G$ is a pure inner form of a quasi-split unitary group, 
the definition of the section $s' \colon \mathfrak{N}_{\phi'} \rightarrow \Sch_{\phi}$ 
is slightly more complicated.
See \cite[\S 2.4.1]{KMSW}.
\end{enumerate}
\end{rem}

\section{Proof of Desideratum \ref{unique0} for classical groups}
Now we give a proof of Desideratum \ref{unique0}
when $G$ is a quasi-split classical group.
Namely: 
\begin{thm}\label{unique}
Let $G$ be a quasi-split classical group. 
Assume Desiderata \ref{des} and \ref{IR} for $G$.
For $\phi \in \Phi(G)$, 
if $[\pi] \in \Pi_\phi$ is $\w$-generic, then
$\iota_\w([\pi])$ is the trivial representation of $\Sch_\phi$.
\end{thm}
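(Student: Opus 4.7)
The plan is to reduce to tempered parameters via the Langlands classification, and then, for a tempered $\w$-generic $[\pi] \in \Pi_\phi$ and each $a \in \Sch_\phi$, to compute $\iota_\w([\pi])(a)$ by embedding $\pi$ into an induced representation of a larger classical group $G'$ and comparing two descriptions of a normalized self-intertwining operator on that induced representation: one via the component group from Desideratum \ref{IR}, and one via the preservation of a canonical Whittaker functional from Theorem \ref{OR}.

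\emph{Reduction to the tempered case.} I would apply Desideratum \ref{des} (5): any $[\pi] \in \Pi_\phi$ is the Langlands quotient of a standard module built from some tempered $[\pi_0] \in \Pi_{\phi_0}$, and the canonical inclusion $\Sch_{\phi_0} \hookrightarrow \Sch_\phi$ is bijective and transports $\iota_{\w_0}([\pi_0])$ to $\iota_\w([\pi])$. Since $\pi$ is a quotient of the standard module, $\w$-genericity of $\pi$ forces that of the standard module and hence of $\pi_0$; conversely, by Casselman--Shahidi the Langlands quotient of a generic standard module is its unique generic constituent. So it suffices to treat the tempered case.

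\emph{Construction of the auxiliary induced representation.} Fix $\phi \in \Phi_\temp(G)$, a $\w$-generic $[\pi] \in \Pi_\phi$, and $a \in \Sch_\phi$. Under $\Sch_\phi \cong \BB_\phi^+/\sim$, choose a multiplicity-free representative $\phi^a = \phi_1 \oplus \cdots \oplus \phi_s$ with each $\phi_i$ irreducible (conjugate) self-dual of the correct type, let $\tau_i$ be the corresponding discrete series of $\GL_{k_i}(E)$, and set $\tau = \tau_1 \times \cdots \times \tau_s$. Then $\tau$ is an irreducible tempered (conjugate) self-dual representation of $\GL_k(E)$ with $\phi_\tau = \phi^a$, and the parity condition on $k$ needed to form $G'$ in the cases $G = \Sp(2n), \SO(2n)$ is automatic from $\phi^a \in \BB_\phi^+$. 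Set $\phi' = \phi_\tau \oplus \phi \oplus \phi_\tau \in \Phi_\temp(G')$; by Desideratum \ref{des} (4), $I_0(\tau \boxtimes \pi)$ splits as a direct sum of irreducible tempered representations whose classes lie in $\Pi_{\phi'}$, and exactly one summand $\pi' \subset I_0(\tau \boxtimes \pi)$ is $\w'$-generic.

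\emph{Comparison of the two descriptions.} By Theorem \ref{OR}, the operator $R_{\w'}(w, \cl{\tau \boxtimes \pi})$ preserves the canonical Whittaker functional on $I_0(\tau \boxtimes \pi)$, so it acts as the identity on the $\w'$-generic summand $\pi'$. By Desideratum \ref{IR}, the same restriction equals $\iota_{\w'}([\pi'])(a') \cdot \id$, where $a' \in \Sch_{\phi'}$ is the unique element with $\phi'^{a'} = \phi_\tau$ in $\BB_{\phi'}^+/\sim$. Combining these gives $\iota_{\w'}([\pi'])(a') = 1$. It remains to identify $a'$ with the image of $a$ under the canonical inclusion $\Sch_\phi \hookrightarrow \Sch_{\phi'}$ of Desideratum \ref{des} (4): both are represented by the same multiplicity-free subrepresentation $\phi^a = \phi_\tau$ of $\phi'$ (viewed either inside the middle factor $\phi$ or as the left-most summand), so they define the same class in $\BB_{\phi'}^+/\sim$. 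Desideratum \ref{des} (4) then yields $\iota_\w([\pi])(a) = \iota_{\w'}([\pi'])(a') = 1$, and since $a$ was arbitrary, $\iota_\w([\pi])$ is trivial. I expect the main subtlety, beyond the cited inputs, to be this combinatorial identification of component-group elements, together with a careful treatment of the $\SO(2n)$ case, where the section $s'$ and the $\ep$-conjugation entering the definition of $[\pi]$ must be tracked through the argument.
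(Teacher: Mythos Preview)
Your proposal is correct and follows essentially the same route as the paper. The paper streamlines the tempered step by observing that, since every irreducible constituent of $\phi_\tau = \phi^a$ already occurs in $\phi$ and $\phi' = \phi_\tau \oplus \phi \oplus \phi_\tau$ changes each multiplicity by an even number, the inclusion $\Sch_\phi \hookrightarrow \Sch_{\phi'}$ is in fact a bijection; hence $I_0(\tau\boxtimes\pi)$ is already irreducible, so there is no need to single out a generic summand or to carry out a separate combinatorial identification of $a'$ with the image of $a$.
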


The proof of Theorem \ref{unique} is a formal consequence of
the intertwining relation (Desideratum \ref{IR}) together with Shahidi's result.
We recall Shahidi's result in \S \ref{canonical}.

\subsection{Canonical Whittaker functional}\label{canonical}
In this subsection, we recall canonical Whittaker functionals of induced representations.
\par

Let $G$ be a quasi-split (connected) classical group
and $\w = (B,\mu)$ be a Whittaker datum for $G$.
Fix a positive integer $k$.
If $G=\Sp(2n)$ or $G=\SO(2n)$, we assume that $k$ is even.
Let $G'$, $\w' = (B', \mu')$, $P=M_PU_P$, $w \in W(M_P)$ and $\cl{w} \in G'(F)$ 
be as in \S \ref{sec.IR}.
Let $\tau$ be an irreducible tempered representation of $\GL_k(E)$ on a space $\VV_\tau$.
For $s \in \C$, we realize $\tau_s = \tau|\det|_E^s$ on $\VV_\tau$ by setting
$\tau_s(a)v \coloneqq |\det(a)|_E^s \tau(a)v$ for $v \in \VV_\tau$ and $a \in \GL_k(E)$.
Let $\pi$ be an irreducible tempered representation of $G(F)$ on a space $\VV_{\pi}$.
We consider the normalized induction
\[
I_s(\tau \boxtimes \pi) = \Ind_{P(F)}^{G'(F)}(\tau_s \boxtimes \pi).
\]
\par

Now we assume that $\pi$ is $\w$-generic.
We regard $\w'$ as a Whittaker datum for $M_P$ by the restriction.
Since $\tau$ is tempered, we see that $\tau \boxtimes \pi$ is $\w'$-generic.
Let $\omega \colon \VV_\tau \otimes \VV_\pi \rightarrow \C$ 
be a nonzero $\w'$-Whittaker functional, i.e., 
\[
\omega((\tau \boxtimes \pi)(u)v) = \mu'(u)\omega(v)
\]
for $u \in U' \cap M_P$ and $v \in \VV_\tau \otimes \VV_\pi$.
Note that the representative $\cl{w} \in G'(F)$ of $w \in W(M_P)$ and the linear map 
$\cl{\tau\boxtimes \pi}(\cl{w}) \colon 
\VV_{\tau} \otimes \VV_\pi \rightarrow \VV_{\tau} \otimes \VV_\tau$
satisfy that
\begin{itemize}
\item
$\cl{w}^{-1}B'\cl{w} = B'$;
\item
$\mu'(\cl{w}^{-1}u\cl{w}) = \mu'(u)$ for any $u \in U'(F) \cap M_P(F)$;
\item
$\omega \circ \cl{\tau\boxtimes \pi}(\cl{w}) = \omega$.
\end{itemize}
See also \cite[\S 2.5]{Ar} and \cite[\S 3.5]{Mo}.
\par

We define the Jacquet integral 
$\WW_{\mu',\omega}(g', f_s)$ for $f_s \in I_s(\tau \boxtimes \pi)$ by
\[
\WW_{\mu', \omega}(g',f_s) = \int_{U_P(F)}\omega(f_s(\cl{w}^{-1}ug')) \mu'(u)^{-1}du, 
\]
where $du$ is a Haar measure on $U_P(F)$.
By \cite[Proposition 2.1]{CS} and \cite[Proposition 3.1]{Sh1}, 
the integral $\WW_{\mu',\omega}(g', f_s)$ 
is absolutely convergent for $\re(s) \gg 0$, and
has an analytic continuation as an entire function of $s \in \C$.
The map $f_0 \mapsto \WW_{\mu', \omega}(1,f_0)$ gives a nonzero $\w'$-Whittaker functional
\[
\Omega_{\mu',\omega} \in \Hom_{U'(F)}(I_0( \tau \boxtimes \pi), \mu').
\]
\par

The following theorem follows from Shahidi's results \cite{Sh1}, \cite{Sh2}.
See also \cite[Theorem 2.5.1]{Ar} and \cite[Proposition 3.5.3]{Mo}.

\begin{thm}\label{OR}
Let $R_{\w'}(w, \cl{\tau \boxtimes \pi}) \colon I_0(\tau \boxtimes \pi) \rightarrow I_0(\tau \boxtimes \pi)$
be the normalized self-intertwining operator and
$\Omega_{\mu',\omega} \colon I_0(\tau \boxtimes \pi) \rightarrow \C$ 
be the $\w'$-Whittaker functional defined as above.
Then we have
\[
\Omega_{\mu',\omega} \circ R_{\w'}(w,\cl{\tau \boxtimes \pi}) = \Omega_{\mu',\omega}.
\]
\end{thm}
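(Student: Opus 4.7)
The plan is to deduce the identity from Shahidi's local coefficient identity together with the explicit form of the Arthur--Mok normalization of the intertwining operator. Everything takes place on the one-parameter family $I_s(\tau \boxtimes \pi)$, and one ultimately specializes at $s=0$.

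First I would recall the (unnormalized) standard intertwining operator $M(w, \tau_s \boxtimes \pi) \colon I_s(\tau \boxtimes \pi) \to I_{-s}(w(\tau \boxtimes \pi))$ defined for $\re(s) \gg 0$ by
\[
(M(w, \tau_s \boxtimes \pi)f_s)(g') = \int_{U_P(F)} f_s(\cl{w}^{-1} u g')\,du
\]
and continued meromorphically in $s$. Following Arthur (\cite[\S 2.3]{Ar}) and Mok (\cite[\S 3.3]{Mo}), the normalized operator is $R_{\w'}(w, \tau_s \boxtimes \pi) = r_{\w'}(s, \tau, \pi)^{-1} M(w, \tau_s \boxtimes \pi)$, where $r_{\w'}(s, \tau, \pi)$ is a prescribed ratio of local $L$-factors and $\ep$-factors attached to $\phi_\tau$ and to the $L$-parameter of $\pi$.

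Next, I would invoke Shahidi's functional equation for the Jacquet integral: by \cite{Sh1} (see also \cite[Proposition 3.1]{Sh1} and \cite[Theorem 2.5.1]{Ar}, \cite[Proposition 3.5.3]{Mo}), for $\re(s) \gg 0$ and hence by analytic continuation for all $s$,
\[
\WW_{\mu', \omega}\bigl(g', M(w, \tau_s \boxtimes \pi)f_s\bigr) = C_{\w'}(s, \tau, \pi)\, \WW_{\mu', \omega'}(g', f_s),
\]
where $C_{\w'}(s, \tau, \pi)$ is the Shahidi local coefficient and $\omega'$ is the Whittaker functional on $\VV_{w\tau} \otimes \VV_\pi$ obtained by transporting $\omega$ under the action of $\cl{w}$. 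The crucial input, built into the Arthur--Mok normalization, is the identity
\[
r_{\w'}(s, \tau, \pi) = C_{\w'}(s, \tau, \pi),
\]
so dividing through gives $\WW_{\mu', \omega}(g', R_{\w'}(w, \tau_s \boxtimes \pi)f_s) = \WW_{\mu', \omega'}(g', f_s)$ as entire functions of $s$.

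Finally, I would specialize to $s=0$ and compose with $\cl{\tau \boxtimes \pi}(\cl{w})$. Since $\cl{\pi}(\cl{w})$ is the identity and $\cl{\tau}(\cl{w})$ was defined to be the unique linear map preserving the Whittaker functional on $\VV_\tau$ (with respect to the datum obtained from $\w'$), the pullback of $\omega$ under $\cl{\tau \boxtimes \pi}(\cl{w})$ is $\omega$ itself; equivalently $\Omega_{\mu', \omega'} \circ \cl{\tau \boxtimes \pi}(\cl{w}) = \Omega_{\mu', \omega}$. Composing with the previous equality yields
\[
\Omega_{\mu', \omega} \circ R_{\w'}(w, \cl{\tau \boxtimes \pi}) = \Omega_{\mu', \omega' } \circ R_{\w'}(w, \tau \boxtimes \pi) \circ (\text{id, after transport}) = \Omega_{\mu', \omega},
\]
which is the assertion. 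The main obstacle is the matching $r_{\w'} = C_{\w'}$: this is not formal but is exactly the content of Shahidi's theorem on local coefficients, and it is this input that permits the clean statement above, so in a self-contained write-up I would treat it as the place where one cites \cite{Sh1, Sh2} (together with \cite[Theorem 2.5.1]{Ar} and \cite[Proposition 3.5.3]{Mo}) rather than reprove it.
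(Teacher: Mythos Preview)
Your proposal is correct and follows essentially the same route that the paper indicates. The paper does not give a proof beyond citing Shahidi \cite{Sh1}, \cite{Sh2} together with \cite[Theorem~2.5.1]{Ar} and \cite[Proposition~3.5.3]{Mo}; your sketch is precisely an unpacking of those references---Shahidi's functional equation for the Jacquet integral, the identification of the local coefficient with the Arthur--Mok normalizing factor, and the compatibility of $\cl{\tau\boxtimes\pi}(\cl{w})$ with the fixed Whittaker functional $\omega$.
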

Recall that to define $\Omega_{\mu',\omega}$, we have to choose
a Haar measure $du$ on $U_P(F)$.
We observe that Theorem \ref{OR} is independent of this choice.

\subsection{Proof}\label{proof}
Now we prove Theorem \ref{unique}.
First, we consider the tempered case.
Let $\phi \in \Phi_\temp(G)$.
Suppose that $[\pi] \in \Pi_\phi$ is $\w$-generic.
Fix a non-trivial element $a \in \Sch_\phi$.
It is enough to show that $\iota_\w([\pi])(a) = 1$.
Choose a representative $\phi_\tau \in \BB_\phi^+$ of $\phi^{a} \in \BB_\phi^+/\sim$. 
Then we have ${}^c \phi_\tau ^\vee \cong \phi_\tau$.
Let $\tau \in \Irr(\GL_k(E))$ be the irreducible tempered representation corresponding to $\phi_\tau$, 
where $k = \dim(\phi_\tau)$.
Note that $k$ is even if $G=\Sp(2n)$ or $G=\SO(2n)$.
Let $G'$ and $\w'$ be as in \S \ref{canonical}. 
Put
\[
\phi'= \phi_\tau \oplus \phi \oplus \phi_\tau \in \Phi_\temp(G').
\]
Then the canonical inclusion $\Sch_{\phi} \hookrightarrow \Sch_{\phi'}$ is in fact bijective 
(cf.~\S \ref{Lpara}).
Consider the induced representation
\[
\pi'=I_0(\tau \boxtimes \pi).
\]
Then by Desideratum \ref{des} (4), we see that
$\pi'$ is irreducible, $[\pi'] \in \Pi_{\phi'}$ and $\iota_{\w'}([\pi'])|\Sch_\phi = \iota_{\w}([\pi])$.
Moreover, by Desideratum \ref{IR}, we have
\[
R_{\w'}(w, \cl{\tau \boxtimes \pi}) = \iota_{\w'}([\pi'])(a) \cdot \id.
\]
On the other hand, by Theorem \ref{OR}, we have
\[
\Omega_{\mu',\omega} \circ R_{\w'}(w, \cl{\tau \boxtimes \pi}) = \Omega_{\mu',\omega}.
\]
Since $\Omega_{\mu',\omega}$ is a nonzero functional on $\pi'$, we must have 
\[
\iota_{\w'}([\pi'])(a) =1.
\]
This implies that $\iota_\w([\pi])(a) = 1$.
\par

Next, we consider the general case.
Let $\phi \in \Phi(G)$ and assume that $[\pi] \in \Pi_\phi$ is $\w$-generic.
We may decompose 
\[
\phi = \phi_1|\cdot|^{s_1} \oplus \dots \oplus \phi_r|\cdot|^{s_r} \oplus 
\phi_0 \oplus {}^c\phi_r^\vee|\cdot|^{-s_r} \oplus \dots \oplus {}^c\phi_1^\vee|\cdot|^{-s_1}
\]
as in Desideratum \ref{des} (5).
Then $\pi$ is the unique Langlands quotient of
\[
\Ind_{P(F)}^{G(F)}
(\tau_1|\det|_E^{s_1} \otimes \dots \otimes \tau_r|\det|_E^{s_r} \otimes \pi_0)
\]
for some $\pi_0 \in \Irr_\temp(G_0(F))$ such that $[\pi_0] \in \Pi_{\phi_0}$.
Here $\tau_i$ is the irreducible tempered representation of $\GL_{k_i}(E)$ 
corresponding to $\phi_i$.
Moreover, we have $\iota_\w([\pi])|\Sch_{\phi_0} = \iota_{\w_0}([\pi_0])$
for the Whittaker datum $\w_0=(B \cap G_0, \mu|U_0(F))$ for $G_0$, 
where $U_0 = U \cap G_0$ is the unipotent radical of 
the Borel subgroup $B_0 = B \cap G_0$ of $G_0$.
By a result of Rodier \cite{R} and \cite[Corollary 1.7]{CS}, there is an isomorphism
\[
\Hom_{U(F)}(
\Ind_{P(F)}^{G(F)}(\tau_1|\det|_E^{s_1} \otimes \dots \otimes \tau_r|\det|_E^{s_r} \otimes \pi_0)
,\mu) \cong \Hom_{U_0(F)}(\pi_0,\mu|U_0(F)).
\]
However, since $\pi$ is $\w$-generic, by the standard module conjecture proved in
\cite{CShahidi}, \cite{HM} and \cite{HO}, 
the standard module
$\Ind_{P(F)}^{G(F)}(\tau_1|\det|_E^{s_1} \otimes \dots \otimes \tau_r|\det|_E^{s_r} \otimes \pi_0)$
is irreducible, so that it is isomorphic to $\pi$.
This implies that $\pi_0$ is $\w_0$-generic.
Hence $\iota_{\w_0}([\pi_0])$ is the trivial character of $\Sch_{\phi_0}$.
Since the inclusion $\Sch_{\phi_0} \hookrightarrow \Sch_\phi$ is bijective, 
we see that $\iota_\w([\pi])$ is the trivial character of $\Sch_{\phi}$.
This completes the proof of Theorem \ref{unique}.

\subsection{Remark}
Finally, we remark on \cite[Lemma 2.5.5]{Ar}.
This lemma is the ``converse'' of our proof of Theorem \ref{unique}.
Roughly speaking, 
this lemma asserts that when the residual characteristic of $F$ is not two, 
the uniqueness of generic representations in an $L$-packet
(Theorem \ref{unique}) for all proper Levi subgroups $M$ of $G$ implies that
\cite[Theorem 2.4.1]{Ar}, which we use to prove
the intertwining relation for $G$ (Theorem \ref{IRproof}).
On the other hand, to prove Theorem \ref{unique} for $G$, 
we used the intertwining relation for a bigger group $G'$, 
which has a Levi subgroup $M_P(F) \cong \GL_k(E) \times G(F)$.
Therefore, in the cases when this lemma is used, 
one should give another proof of Theorem \ref{unique}  without the intertwining relation.
\par

Arthur and Mok have applied \cite[Lemma 2.5.5]{Ar} 
only to the basic cases \cite[Lemmas 6.4.1, 6.6.2]{Ar} and \cite[Proposition 7.4.3]{Mo},
when the Levi subgroup $M$ is a torus or a product of torus and $\SL(2)$.
As noted in the proof of \cite[Lemma 6.4.1]{Ar}, 
Theorem \ref{unique} has already been established for these basic cases.
For tori, it is trivial since the component groups are always trivial.
Hence one only treats $G=\SL(2)$.
More precisely, for $\phi \in \Phi_\temp(\SL(2))$ and a Whittaker datum $\w$ for $\SL(2,F)$, 
one has to check that
$\pi \in \Pi_\phi$ is $\w$-generic if and only if 
$\iota_\w(\pi)$ is the trivial representation of $\Sch_\phi$.
This has been shown by Kottwitz--Shelstad (the end of \S 5.3 in \cite{KS}).
See also \cite[\S 2]{LL}.


\end{document}